\newtheorem{Theorem}{Theorem}
\newtheorem{Lemma}[Theorem]{Lemma}
\newtheorem{Conjecture}[Theorem]{Conjecture}
\theoremstyle{remark}
\newtheorem*{Remark}{Remark}
\newtheorem*{rems}{Remarks} 
\newcommand{\la}{\lambda}
\author[G.~Bhatnagar]{Gaurav Bhatnagar}
\address{Fakult\"at f\"ur Mathematik\\
Universit\"at Wien\\
Oskar-Morgenstern-Platz 1\\
A-1090 Vienna, Austria.}
\curraddr{School of Physical Sciences,
Jawaharlal Nehru University,
Delhi, India.}
\email{bhatnagarg@gmail.com}
\author[M.\ J.\ Schlosser]{Michael J.\ Schlosser}
\address{Fakult\"at f\"ur Mathematik\\
Universit\"at Wien\\
Oskar-Morgenstern-Platz~1\\
A-1090 Vienna, Austria}
\email{michael.schlosser@univie.ac.at}
\title[A partial theta function Borwein conjecture]{A partial theta function
 Borwein conjecture}
\subjclass{Primary 11B65; Secondary 05A20, 11B83, 33D52}
\keywords{$q$-series, Borwein conjecture, non-negativity,
multiple basic hypergeometric series with Macdonald polynomial argument}
\dedicatory{Dedicated to George Andrews on the occasion
of his 80th birthday}
\begin{document}

\begin{abstract}
We present an infinite family of Borwein type $+ - - $ conjectures.
The expressions in the conjecture are related to multiple
basic hypergeometric series with Macdonald polynomial argument.
\end{abstract}

\maketitle



\section{Introduction}
The so-called Borwein conjectures, due to Peter Borwein (circa 1990), were  popularized by Andrews \cite{An1995}. The first of these concerns the expansion of finite products of the form
$$(1-q)(1-q^2)(1-q^4)(1-q^5)(1-q^7)(1-q^8)\cdots$$
into a power series in $q$ and the sign pattern displayed by the coefficients. 
In June 2018, in a conference at Penn State celebrating Andrews' 80th birthday, 
Chen Wang, a young Ph.D. student studying at the University of Vienna, announced that he has vanquished the first of the Borwein conjectures. In this paper, we 
propose another set of Borwein-type conjectures.
The conjectures here are consistent with the first two Borwein conjectures, and one given by Ismail, Kim and Stanton \cite{IKS1999, DS1999}.
At the same time, they do not appear to be very far from these conjectures in form and content. However, they are on different lines from other extensions of Borwein conjectures considered in 
\cite{BW2005, DB1996, IKS1999, RP2009, DS1999,  SOW2001b, SOW2003b}.

Borwein's first conjecture may be stated as follows:
the polynomials $A_n(q)$, $B_n(q)$, and $C_n(q)$ defined by
\begin{equation}\label{borwein1}
\prod_{i=0}^{n-1} (1-q^{3i+1}) (1-q^{3i+2}) = 
A_n(q^3)-qB_n(q^3)-q^2C_n(q^3),
\end{equation}
each have non-negative coefficients.  This is the one now settled by Wang~\cite{Wang2019}. We say that the polynomial on the left-hand side
satisfies the Borwein $+ - - $ condition.  

Our first conjecture considers products of the form
$$
\prod_{i=0}^{n-1} (1-q^{3i+1}) (1-q^{3i+2})
\prod_{j=1}^m \prod_{i=-n}^{n-1} (1-p^jq^{3i+1})(1- p^jq^{3i+2}) 
.
$$
Computational evidence suggests that for fixed $k$, the coefficient of $p^k$ (a Laurent polynomial in $q$) satisfies the Borwein $+ - -$ condition for $n$ large enough. For $m=0$, this reduces to the left-hand side of \eqref{borwein1}.

This paper is organized as follows. In Section~2 we present a precise statement of this conjecture and outline the computational evidence for this conjecture. We also make another---even more general---conjecture, which is motivated by the first two Borwein conjectures, and Andrews' refinement of these conjectures. Our third and most general conjecture is motivated by Ismail, Kim and Stanton \cite[Conjecture~1]{IKS1999} (see also Stanton~\cite[Conjecture~3]{DS1999}).
In Section~3, we make some remarks concerning the connection
to multiple basic hypergeometric series with Macdonald polynomial argument.

\section{The conjectures}

Let $a$, $p$ and $q$ be formal variables. We shall work in the ring of Laurent polynomials in $q$.
For $n$ being a non-negative integer or infinity,
the $q$-shifted factorial is defined as follows:
\begin{equation*}
(a;q)_n:=\prod_{j=0}^{n-1}(1-a q^j).
\end{equation*}
For convenience, we write
\begin{equation*}
(a_1,\dots,a_m;q)_n:=\prod_{k=1}^m(a_k;q)_n
\end{equation*}
for products of $q$-shifted factorials.
With this notation, our first conjecture can be stated as follows. 
\begin{Conjecture}\label{conj1}
Let $m$ and $k$ be non-negative integers.
Let the Laurent polynomials $A_{m,n,k}(q)$, $B_{m,n,k}(q)$, and
$C_{m,n,k}(q)$ be defined by
\begin{align}\label{eq:conj1}
&(q, q^2;q^3)_n \prod_{j=1}^m(p^jq,p^jq^2;q^3)_n (p^jq^{-1},p^jq^{-2};q^{-3})_n
\notag\\
&=\sum_{k\geq 0} p^k\left[A_{m,n,k}(q^3)-q B_{m,n,k}(q^3)-q^2
C_{m,n,k}(q^3)\right].
\end{align}
Then for each $m, k\geq 0$,  there is a non-negative integer $N_{m,k}$, such that
if $n\geq N_{m, k}$ then the Laurent polynomials 
$A_{m,n,k}(q)$, $B_{m,n,k}(q)$, and
$C_{m,n,k}(q)$ have non-negative coefficients. 

%
Further, for $m=1$ we have $N_{1,k}=0$, for $k\le 4$, and $N_{1,k} = \lceil\frac k4\rceil$ for $k\ge 5$, while for $m>1$, $N_{m,k} \equiv N_k $ is independent of $m$.

\end{Conjecture}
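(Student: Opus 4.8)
\medskip
\noindent\emph{Outline of an approach.}
Since $(q,q^2;q^3)_n$ is a common factor on the left-hand side of \eqref{eq:conj1}, it suffices to understand $T_{m,n,k}(q):=[p^k]\,\prod_{j=1}^m(p^jq,p^jq^2;q^3)_n\,(p^jq^{-1},p^jq^{-2};q^{-3})_n$. Writing the factor of index $j$ as $\prod_{i=-n}^{n-1}(1-p^jq^{3i+1})(1-p^jq^{3i+2})=\sum_{a\ge0}(-p^j)^a e_a(V_n)$, where $e_a(V_n)$ is the $a$-th elementary symmetric polynomial in the $4n$ monomials $V_n=\{q^\ell:1\le|\ell|\le3n-1,\ \ell\not\equiv0\pmod{3}\}$, one obtains $T_{m,n,k}=\sum(-1)^{a_1+\cdots+a_m}\prod_{j=1}^m e_{a_j}(V_n)$, summed over $(a_1,\dots,a_m)$ with $\sum_j ja_j=k$; by the finite $q$-binomial theorem each $e_a(V_n)$ is an explicit combination of Gaussian binomials. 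Two structural facts come for free. First, no admissible tuple uses an index $j>k$, so $T_{m,n,k}=T_{k,n,k}$ for all $m\ge k$; this already forces $N_{m,k}$ to be eventually independent of $m$, and is the mechanism behind the assertion $N_{m,k}\equiv N_k$ for $m>1$. Second, each factor $\prod_{i=-n}^{n-1}(1-p^jq^{3i+1})(1-p^jq^{3i+2})$ is invariant under $q\mapsto q^{-1}$ (reindex $i\mapsto-i-1$), so $T_{m,n,k}$ is a palindromic Laurent polynomial; together with $(q^{-1},q^{-2};q^{-3})_n=q^{-3n^2}(q,q^2;q^3)_n$ this shows that $A_{m,n,k}$ is palindromic while $C_{m,n,k}$ is the reversal of $B_{m,n,k}$, so it is enough to establish the non-negativity of $A_{m,n,k}$ and of $B_{m,n,k}$.

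For $m=1$ the coefficient of $p^k$ is exactly $(-1)^k(q,q^2;q^3)_n\,e_k(V_n)$, and $\lceil k/4\rceil$ is precisely the first $n$ with $|V_n|=4n\ge k$: for smaller $n$ the coefficient vanishes and the $+--$ condition holds vacuously, and the content is that it holds for every $n\ge\lceil k/4\rceil$. The plan is to write $e_k(V_n)$ explicitly via the $q$-binomial theorem, split the coefficient of $p^k$ into its three residue classes modulo $3$, and exhibit for each class a combinatorial model from which non-negativity is manifest for all $n\ge\lceil k/4\rceil$; the exceptional values $N_{1,k}=0$ for $k\le 4$ then reduce to a finite check. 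As a guide, the $q$-binomial theorem rewrites the two factors of $(pq,pq^2;q^3)_\infty$ as $\sum_k(-1)^kq^{(3k^2-k)/2}p^k/(q^3;q^3)_k$ and $\sum_k(-1)^kq^{(3k^2+k)/2}p^k/(q^3;q^3)_k$, whose numerators are partial theta functions; this is the source of the paper's title and suggests that the residue-class components above should themselves be governed by partial theta functions.

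For general $m$ one reduces to $2\le m\le k$ by the first structural fact, and the remaining point is that the onset of positivity does not move, i.e.\ $N_{m,k}=N_k$ for every $m\ge2$. I would argue by induction on $m$, removing the largest index: the factor of index $m$ contributes only to powers of $p$ divisible by $m$, so for $k<2m$ the coefficient of $p^k$ is unchanged and nothing new is required, while for $k\ge2m$ the new cross terms are again $\pm(q,q^2;q^3)_n$ times products of Gaussian binomials, and one must show that their combined contribution preserves the $+--$ pattern once $n\ge N_k$, using the $m=1$ analysis together with the inductive hypothesis for $m-1$. That this threshold should agree with the $m=1$ value only for $k\le4$ is consistent with the extra $m\ge2$ terms first entering at $p^2$.

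The genuine obstacle is the positivity input itself. Already the case $m=k=0$, in which the left-hand side is just $(q,q^2;q^3)_n$, is Borwein's first conjecture, whose only known proof, due to Wang~\cite{Wang2019}, is a delicate analytic estimate. The difficulty in the present, more general setting is of the same nature: the individual summands arising when $e_k(V_n)$ --- and, for $m\ge1$, the further cross terms --- is multiplied into $(q,q^2;q^3)_n$ are of the form $\pm q^s(q,q^2;q^3)_n$, and (apart from the trivial case $+q^{3t}(q,q^2;q^3)_n$) such a term does not satisfy the $+--$ condition on its own, so positivity can emerge only from a carefully chosen regrouping of the full sum --- which is exactly what the residue-class model demands and what no method currently delivers even for $m=0$. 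I expect the decisive ingredient to be a manifestly non-negative formula for $A_{m,n,k}$, $B_{m,n,k}$, $C_{m,n,k}$, most plausibly through the multiple basic hypergeometric series with Macdonald polynomial argument discussed in Section~3 (a principal specialisation of Macdonald polynomials supplying non-negative building blocks), or through an extension of Wang's estimates that is uniform in the parameters $p,p^2,\dots,p^m$. Short of that, the statements one can hope to prove unconditionally are the two structural reductions, the palindromy of $A_{m,n,k}$ and the reversal symmetry relating $B_{m,n,k}$ and $C_{m,n,k}$, and the small-$k$ (hence small-$m$) cases --- which between them pin down $N_{1,k}=\lceil k/4\rceil$ and the $m$-independence of $N_{m,k}$ for $m\ge2$, modulo the master positivity statement.
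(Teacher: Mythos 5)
You should be aware at the outset that the statement you were asked to prove is Conjecture~1 of the paper: the authors offer no proof, only computational evidence (Table~1) and a handful of structural remarks, so there is no proof in the paper against which to measure your argument. Judged on its own terms, your outline gets the provable structure right, and it matches what the paper itself records: the reversal symmetry you derive from $q\mapsto q^{-1}$ together with $(q^{-1},q^{-2};q^{-3})_n=q^{-3n^2}(q,q^2;q^3)_n$ is exactly the paper's Note~6 (namely $q^{n^2-1}B_{m,n,k}(q^{-1})=C_{m,n,k}(q)$); your identification of the $m=1$ coefficient of $p^k$ as $(-1)^k(q,q^2;q^3)_n\,e_k(V_n)$ with $|V_n|=4n$ reproduces Note~3 and explains the threshold $\lceil k/4\rceil$ as the onset of non-vanishing; and the constraint $\sum_j ja_j=k$ with $a_j\le 4n$ gives the degree bound of Note~2. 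These are correct and worth writing up as propositions.

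Two points need flagging. First, your stabilization argument gives $T_{m,n,k}=T_{k,n,k}$ only for $m\ge k$, whereas the conjecture asserts that $N_{m,k}$ is independent of $m$ already for all $m\ge 2$ (and Table~1 exhibits this for $k$ up to $15$ at $m=2$, far below the range your reduction covers); your inductive step for $2\le m<k$ is a hope rather than an argument, and the claim that the factor of index $m$ leaves the coefficient of $p^k$ unchanged for $k<2m$ should read $k<m$, since that factor contributes powers $p^{ma}$ with $a\ge 1$ and so first interferes at $p^m$. Second, and decisively, the positivity itself is entirely open, as you candidly admit: the case $m=0$ (or $k=0$) is Borwein's first conjecture, settled only by Wang's delicate analytic argument, and nothing in your elementary-symmetric-function regrouping produces manifestly non-negative summands. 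The paper's own Theorem~1 does give $(2m+1)$-fold sum representations for the generating functions $F^l_{m,n}(p,q)$ via Kaneko's Macdonald-polynomial identity, but the authors explicitly remark that even polynomiality in $p$ is not visible from those expressions, let alone positivity of the coefficients --- so the ``manifestly non-negative formula'' you hope to extract from Section~3 is not supplied there either. In short, your text is an honest and largely accurate research plan whose unconditional content coincides with the paper's side remarks, but the conjecture itself remains unproved by both you and the authors.
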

\subsection*{Notes}
\ 
\begin{enumerate}
\item The case $m=0$ or $k=0$ of Conjecture~\ref{conj1}
is consistent with  the first Borwein conjecture, see \cite[Equation (1.1)]{An1995}. 
\item 
For given $m$ and $n$, 
the summation index $k$ is bounded by
$$k\le 4 n \binom{m+1}{2} = 2 m (m+1) n.$$
\item  For $m=1$, we must have $n\geq k/4$. Indeed, 
$n=\lceil\frac k4\rceil$
are the values of $N_{m,k}$ in Table~1 for $m=1$ for $k\ge 5$. For $k<5$, $\lceil\frac k4\rceil =1$,
so we have $N_{m,k} =0$, since for $n=0$ the statement of the conjecture holds trivially.
\item We examined the products for $m = 1, 2, \dots, 10$; $k = 0, 1, 2, \dots, 15$;
and $n=0, 1, 2,$ $\dots , 25$.  
For fixed $m$ and $k$, the value of $N_{m,k}$ such that the coefficient of $p^k$ in the products satisfy the Borwein $+ - - $ condition for $N_{m,k}\le n\le 25$ (for $m\le 5$) are recorded in Table~1. The  
values for $m=6, 7, \dots, 10$ were the same as for $m=5$. Thus for $m>1$, the values of $N_{m,k}$ appear to be independent of $m$. 
\item  The coefficients of $A_{m,n,k}(q)$ were non-negative for all the values of $m, n,$ and $k$ that we computed. 
\item The coefficients of powers of $q$ in $q^2C_{m,n,k}(q^3)$ are the same
as those of $qB_{m,n,k}(q^3)$, but in reverse order, that is, we have,
$$q^{n^2-1} B_{m,n,k} (q^{-1}) = C_{m,n,k}(q).$$
This can be seen by
replacing $q$ by $q^{-1}$ in \eqref{eq:conj1} and comparing the two sides.
\item One can ask, as did Stanton for \cite[Conjecture~3]{DS1999}, 
whether Conjecture~\ref{conj1} holds for
$n=\infty$.  However, this question is not applicable here, since the product on the
left-hand side of \eqref{eq:conj1} is not defined at $n=\infty$. 
 \end{enumerate}

\begin{center}
\begin{table}\label{table:conj1}
\begin{tabular}{|c | c| c | c | c | c | c | c | c |c | c | c | c | c | c | c | c | }
\hline
$m$ {\textbackslash} $k$ & 0 & 1 & 2 & 3 & 4  & 5 & 6 & 7 & 8  & 9 & 10 & 11 & 12  & 13 & 14 & 15   \cr
\hline
1 & 0 & 0 & 0 & 0  & 0 & 2 & 2 & 2  & 2 & 3 & 3 & 3  & 3 & 4 & 4 & 4 \cr
\hline
2 & 0 & 0 & 0 & 5  & 5 & 8 & 8 & 11  & 12 & 14 & 15 & 17  & 18 & 20 & 21 & 23 \cr
\hline
3 & 0 & 0 & 0 & 5  & 5 & 8 & 8 & 11  & 12 & 14 & 15 & 17  & 18 & 20 & 21 & 23 \cr
\hline
4 & 0 & 0 & 0 & 5  & 5 & 8 & 8 & 11  & 12 & 14 & 15 & 17  & 18 & 20 & 21 & 23 \cr
\hline
5 & 0 & 0 & 0 & 5  & 5 & 8 & 8 & 11  & 12 & 14 & 15 & 17  & 18 & 20 & 21 & 23 \cr
\hline
\end{tabular}
\vspace{5pt}

\caption{Apparent values of $N_{m,k}$, for $m=1, 2, \dots, 5$ and $k = 0, 1, \dots, 15$ }
\end{table}
\end{center}

We now make a few remarks about the form of Conjecture~\ref{conj1}. 
The modified theta function is defined as $$\theta(a; p) := (a; p)_\infty (p/a;p)_\infty .$$ 
Here we take $n=\infty$  and replace $q$ by $p$ in the definition of the $q$-shifted factorial. This product is convergent if $|p|<1$.
Consider the {theta shifted factorials} defined as \cite[eq. (11.2.5)]{GR90}
$$(a; q, p)_n := \prod_{i=0}^{n-1}\theta(a q^i;p) =
 \prod_{i=0}^{n-1} \prod_{j=0}^{\infty} \big(1-a p^j q^i\big)\big(1-p^{j+1}q^{-i}/a\big) .$$
As a natural extension of the Borwein Conjecture, consider 
$$(q; q^3,p)_n (q^2; q^3,p)_n, $$
or,
$$
\prod_{i=0}^{n-1} \prod_{j=0}^{\infty} 
\big(1-p^j q^{3i+1}\big) \big(1-p^j q^{3i+2}\big)
\big(1-p^{j+1}q^{-3i-1}\big) \big(1-p^{j+1}q^{-3i-2}\big) .$$
The product in Conjecture~\ref{conj1} should now be transparent. It is obtained by truncating the infinite products indexed by $j$. 
Indeed, one can try even more general ways to truncate the products.  
\begin{Conjecture}\label{conj2}
Let $m_1$, $m_2$, $n_1$, $n_2$, $n_3,$ and $k$ be non-negative integers.
Let the Laurent polynomials $A(q)=A_{m_1, m_2, n_1, n_2, n_3, k}(q)$, 
$B(q)=B_{m_1, m_2, n_1, n_2, n_3, k}(q)$ and
$C(q)=C_{m_1, m_2, n_1, n_2, n_3, k}(q)$ be defined by
\begin{align}
&(q, q^2;q^3)_{n_1} \prod_{j=1}^{m_1}(p^jq,p^jq^2;q^3)_{n_2}
\prod_{j=1}^{m_2} (p^jq^{-1},p^jq^{-2};q^{-3})_{n_3}
\notag\\
&=\sum_{k\geq 0} p^k\left[A(q^3)-q B(q^3)-q^2
C(q^3)\right].
\end{align}
For given $k$, if Let $m_1, m_2\geq 1$, and 
$n_1$, $n_2$ and $n_3$ are large enough, then the polynomials 
$A(q)$, $B(q)$, and
$C(q)$ have non-negative coefficients.  
\end{Conjecture}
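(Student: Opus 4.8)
\emph{Reduction to a low-complexity one-variable problem.}
To attack Conjecture~\ref{conj2}, the plan is to fix $k$ and reduce the claim to an asymptotic positivity statement about a single Laurent polynomial of bounded complexity. Expanding every $q$-shifted factorial on the left-hand side by the $q$-binomial theorem $(a;q^3)_N=\sum_{r\ge 0}(-1)^rq^{3\binom{r}{2}}\binom{N}{r}_{q^3}a^r$ and collecting the total power of $p$, one finds that, since the central factor $(q,q^2;q^3)_{n_1}$ carries no $p$, the coefficient of $p^k$ on the left-hand side equals
\[
(q,q^2;q^3)_{n_1}\,T_k(q),\qquad
T_k(q)=\sum_{k_1+k_2=k}\Bigl([p^{k_1}]\prod_{j=1}^{m_1}(p^jq,p^jq^2;q^3)_{n_2}\Bigr)\Bigl([p^{k_2}]\prod_{j=1}^{m_2}(p^jq^{-1},p^jq^{-2};q^{-3})_{n_3}\Bigr).
\]
After expansion, $T_k(q)$ is a signed combination, with a number of terms bounded in terms of $k$ alone, of products of Gaussian binomial coefficients $\binom{n_2}{\cdot}_{q^3}$ and $\binom{n_3}{\cdot}_{q^{-3}}$ and powers of $q$, in which $n_2$ and $n_3$ enter only through the upper indices of the binomials; in particular $T_0=1$. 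Binning the monomials of $(q,q^2;q^3)_{n_1}\,T_k(q)$ by the residue of the exponent modulo $3$, the $+\,-\,-$ condition becomes exactly the assertion that the net coefficient of $q^{3t}$ is $\ge 0$ and those of $q^{3t+1}$ and $q^{3t+2}$ are $\le 0$ for every $t$, once $n_1,n_2,n_3$ are large enough.

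\emph{Anchoring an induction on $k$, and two routes.}
For $k=0$ this is Borwein's first conjecture --- Wang's theorem --- so one may take $N_0=0$. For $k\ge 1$ I would induct on $k$, and two complementary routes present themselves. The analytic route pushes Wang's method through: because $T_k$ has only boundedly many terms, for $n_2,n_3$ large only a few of them dominate, and multiplying by $(q,q^2;q^3)_{n_1}$ --- whose $+\,-\,-$ decomposition Wang controls quantitatively, with explicit error bounds --- one would show that the dominant part already satisfies the pattern with a positive margin and that the remaining terms shift the coefficients by strictly less than that margin once $n_1,n_2,n_3$ exceed an effective $N_k$. The combinatorial route, closer to the viewpoint of Section~3, is to read $T_k$ (or the whole coefficient of $p^k$) as a signed enumeration of tuples of partitions carrying Macdonald-type weights, and to construct a sign-reversing involution cancelling the negative contributions and leaving a manifestly non-negative remainder; a finite multivariable Bailey- or Rogers--Ramanujan-type positivity identity would furnish the involution. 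This route has the extra appeal that it might explain the numerically observed independence of $N_k$ from $m_1,m_2\ge 2$: the factors indexed by larger $j$ contribute only extreme-degree monomials, which such an involution should pair off without disturbing the inner coefficients.

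\emph{The main obstacle.}
The core difficulty is the one that kept Borwein's conjecture open for three decades, and it is sharper here: a product of polynomials each of which satisfies the $+\,-\,-$ condition need not satisfy it, so no soft induction on $k$, on $m_1,m_2$, or on the truncation lengths can work --- one needs the arithmetic of exponents modulo $3$ together with a genuinely effective bound on the cancellations, as in Wang's argument. Two features make the present problem strictly harder. First, the factors $(p^jq^{-1},p^jq^{-2};q^{-3})_{n_3}$ carry arbitrarily negative powers of $q$, so $(q,q^2;q^3)_{n_1}\,T_k(q)$ is a genuinely two-sided Laurent polynomial, and non-negativity is most delicate in its middle coefficients --- exactly where the positive-power block and the negative-power block overlap. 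Second, matching the numerics requires not merely the existence of $N_k$ but a threshold that stabilizes as $m_1,m_2$ grow, which would call for a separate stabilization lemma. I expect that controlling this two-sided overlap --- in effect a parametrized, two-sided version of the central estimate in Wang's proof --- will be the crux, with the explicit value $N_{1,k}=\lceil k/4\rceil$ emerging only after that estimate is made effective.
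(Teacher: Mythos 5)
The statement you are addressing is Conjecture~\ref{conj2}; the paper offers no proof of it, only the numerical evidence recorded in the notes that follow it, so there is no proof of the paper's to compare against. Your submission is likewise not a proof but a research plan, and the question is whether the plan closes the gap. Its one completed step is sound: the $p$-free factor $(q,q^2;q^3)_{n_1}$ does factor out of the coefficient of $p^k$, the convolution formula for $T_k(q)$ is correct, $T_k$ is a signed sum of boundedly many (in terms of $k$) products of Gaussian binomials, and $T_0=1$ correctly anchors the case $k=0$ in Wang's theorem. Everything after that is conditional on steps you do not carry out: the sign-reversing involution is not constructed, the ``finite multivariable Bailey- or Rogers--Ramanujan-type positivity identity'' that would furnish it is not identified, and the extension of Wang's analytic method is asserted rather than argued.

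The analytic route has a concrete flaw as stated. You propose that the dominant part of $(q,q^2;q^3)_{n_1}\,T_k(q)$ satisfies the $+\,-\,-$ pattern ``with a positive margin'' and that the remaining terms perturb each coefficient by less than that margin. But the terms of $T_k(q)$ are products of Gaussian binomials with upper index $n_2$ or $n_3$, whose coefficients grow without bound as $n_2,n_3\to\infty$, so the ``perturbation'' grows with the very parameters you are sending to infinity; moreover the extreme coefficients of $A_{n_1}$, $B_{n_1}$, $C_{n_1}$ are $0$ or $1$, so near the edges there is no margin at all to absorb anything. No comparison between margin and perturbation is offered, and the paper's own Note 2 after the conjecture shows why one is indispensable: Andrews' refinement (the case $m_1=1$, $m_2=0$) fails at $n_1=1$, $n_2=40$, $k=40$, precisely because a short first factor cannot dominate a long second one. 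Any successful argument must therefore couple the growth of $n_1$ to that of $n_2$, $n_3$ and to $k$ (compare the thresholds $N_{m,k}$ in Table~1, which grow roughly linearly in $k$), and your sketch does not do this. The combinatorial route is a reasonable restatement of the difficulty, but as written it resolves nothing; the conjecture remains open after your proposal exactly as it was before it.
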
 
\subsection*{Notes}
\ 
\begin{enumerate} 
\item Borwein's second conjecture~\cite[Equation (1.3)]{An1995} states that $$(q, q^2; q^3)^2_n $$ satisfies the Borwein $+ - - $ condition. 
If we take
$m_1=1$, $m_2=0$, $n_2=n_1$, $p=1$, and ignore 
the condition $m_1,m_2\geq 1$, then the statement of Conjecture~\ref{conj2}, 
reduces to Borwein's second conjecture. 

\item  Andrews' refinement of Borwein's first two conjectures
\cite[Equation (1.5), $x=p$]{An1995} states that for each $k$, the coefficient of $p^k$ in
$$(q, q^2; q^3)_{n_1} (pq, pq^2; q^3)_{n_2} $$ satisfies the Borwein $+ - - $ condition.
Ae Ja Yee kindly informed us (private communication, January 2019), that Andrews' refinement does not hold. For example, it fails for $n_1=1$, $n_2=40$, and $k=40$. Again, if we take 
$m_1=1$ and $m_2=0$, the statement of Conjecture~\ref{conj2} reduces to Andrews' refinement of Borwien's first two conjectures. 

\item Our numerical experiments suggest that we must have $m_1, m_2\geq 1$ in Conjecture~\ref{conj2}. But the data we generated does not contradict Borwein's second conjecture. Further, it
may still be true that Andrews' refinement of Borwein's conjectures is true for large enough values of $n_1$ and $n_2$. 

\item It appears that Table 1 is relevant to Conjecture~\ref{conj2} too.  We observed the following from the data we generated. 
Let $k$ be fixed, and  $m_1, m_2\ge 2$. Let $n=\min\{n_1,n_2,n_3\}$.  Now if 
$n \ge N_k$, where $N_k\equiv N_{2,k}$ is taken from Table 1, the coefficients of $p^k$ in the expansion of the products in question satisfy the  Borwein $+ - - $ condition.  
\end{enumerate}

Next, on the suggestion of Dennis Stanton, we examine a conjecture due to Ismail, Kim and Stanton~\cite[Conjecture~1]{IKS1999} (see 
also Stanton~\cite[Conjecture~3]{DS1999}), who considered
$$ (q^a, q^{K-a};q^K)_n = \sum_{m=0}^\infty a_m q^m,$$
where  $a$ and $K$ are relatively prime integers with $a<K/2$. These authors
conjectured:\newline
If $K$ is odd, then
\begin{align*}
a_m\ge 0 & \text{ if } m \equiv \pm aj \mod K, \text{ for some non-negative even integer } j<K/2, \cr
\intertext{and,}
a_m\le 0 & \text{ if } m \equiv \pm aj \mod K, \text{ for some positive odd integer } j<K/2
.
\end{align*}

In \cite{DS1999}, this conjecture is followed by the statement:
If $K$ is even, then $(-1)^m a_m \geq 0$. The unfortunate placement of this statement suggests that it is part of the conjecture. 
 In fact, it is easy to prove. 
Since $a$ is relatively prime to $K$, and $K$ is even, both $a$ and $K-a$ are odd. Thus all the factors in the product are of the form $(1-q^{\text{odd}})$. Now to obtain a term $q^m$ with $m$ even,  we will need to multiply an even number of monomials of the form $(-q^{\text{odd}})$, so the sign will be positive. Similarly, if $m$ is odd, the sign will be negative. 
 
As in Conjecture~\ref{conj2}, we consider the formal expression
$$ (q^a;q^K,p)_n(q^{K-a};q^K,p)_n,$$
truncate the infinite products,
and check whether the coefficients satisfy a similar sign pattern. For $K$ even, it is easy to 
see that an analogous statement holds for the coefficient of $p^k$ for all non-negative integers $k$. 

For $K$ odd, 
we found that the sign pattern is the same as mentioned above, but only when $a=\lfloor K/2 
\rfloor$. In this case, the pattern is an elegant extension of Borwein's $+--$. When $K$ is of the form $4l+1$ or 
$4l+3$, the sign pattern is as follows:
 \begin{align*}
K=4l+1:\hspace{0.75cm} & 
\underbrace{++\cdots+}_{l+1} \underbrace{- - \cdots -}_{2l} \underbrace{+ + \cdots +}_l\cr
K=4l+3:\hspace{0.75cm} 
 & \underbrace{++\cdots+}_{l+1} \underbrace{- - \cdots -}_{2l+2} \underbrace{+ + \cdots +}_l
\end{align*}
For example, when $K=5$, then the pattern is $+ + -  -  + $, and 
when $K=7$, then the pattern is $+ + - - - -  + $. (As before, the $+$ sign represents  a non-negative, and the $-$ sign represents a non-positive coefficient.)

In what follows, we have replaced $K$ by $2K+1$; we consider only the odd powers of the base $q$. 

\newpage
\begin{Conjecture}\label{conj3}
Let $m_1$, $m_2$, $n_1$, $n_2$, $n_3,$ and $k$ be non-negative integers. Let $K$ be any positive number. 
Let the Laurent polynomials $A_k(q)=A_{m_1, m_2, n_1, n_2, n_3, k,K}(q)$
 be defined by
\begin{multline}\label{conj3-prod}
(q^K, q^{K+1};q^{2K+1})_{n_1} \prod_{j=1}^{m_1}(p^jq^K,p^jq^{K+1};q^{2K+1})_{n_2}
\cr
\times  \prod_{j=1}^{m_2} (p^jq^{-K},p^jq^{-K-1};q^{-2K-1})_{n_3}
=\sum_{k\geq 0} p^k A_k(q),
\end{multline}
where $A_k(q)$ is a Laurent polynomial of the form
$$A_k(q)=\sum_M a_{M,k}q^M.$$
Let $l= \lfloor \frac{2K+1}{4}\rfloor$. For given $k$, and $K$,  
if $m_1, m_2\geq1 $, and  $n_1$, $n_2$ and $n_3$ are large enough, then the 
coefficients $a_{M,k}$ satisfy the following sign pattern: 
$$
a_{M,k}
= \begin{cases}
 \geq  0, & \text{ if } M \equiv 0,  \pm i \mod 2K+1, \text{ for } i = 1, 2, \dots, l, \cr 
\leq  0, & \text{ otherwise.} 
 \end{cases}
 $$
\end{Conjecture}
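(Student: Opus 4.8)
\subsection*{Towards a proof of Conjecture~\ref{conj3}}

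The specialisation $p=0$ of \eqref{conj3-prod} retains only the $k=0$ term, namely $A_0(q)=(q^K,q^{K+1};q^{2K+1})_{n_1}$; the asserted sign pattern for this $A_0$ is Borwein's first conjecture when $K=1$ and the still-open Ismail--Kim--Stanton conjecture for general $K$. Any proof must therefore subsume both, so what follows is a program arranged to isolate where Wang's theorem --- and a uniform strengthening of it --- must enter. The first move is to expand the left-hand side of \eqref{conj3-prod} by the finite $q$-binomial theorem, applied factor by factor. Each factor carrying a power $p^j$ with $j\ge1$ contributes at least $p^j$ whenever a term of positive degree in that variable is selected, so only finitely many selections feed a given $p^k$; collecting them writes $A_k(q)$ as an explicit finite sum of products of Gaussian binomial coefficients times signed powers of $q$, of the form $A_k(q)=A_k^{(0)}(q)+R_k(q)$, where $A_k^{(0)}(q)$ comes from the core factor $(q^K,q^{K+1};q^{2K+1})_{n_1}$ and $R_k(q)$ records how the sign factors carrying powers of $p$ are distributed among the $m_1$ and $m_2$ products.

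Next I would sort the resulting monomials $q^M$ by the residue $M\bmod(2K+1)$. Since $K+1\equiv -K\pmod{2K+1}$ and $\gcd(K,2K+1)=1$, a selection using $r$ factors of residue type $K$ and $s$ of residue type $-K$ produces a monomial in residue class $(r-s)K$ with naive sign $(-1)^{r-s}$, and the claimed window $\{0,\pm1,\dots,\pm l\}$ of non-negative classes, $l=\lfloor(2K+1)/4\rfloor$, is precisely the set reached by the least-imbalanced selections with $r-s$ even (the balanced choice $r=s$ giving residue $0$); for $K=1$ this is the $+--$ trichotomy ($l=0$). The content of the conjecture is that, \emph{after} the large cancellation among selections whose values of $r-s$ differ by the odd number $2K+1$, the net coefficient in each class still has the sign dictated by this window --- precisely the phenomenon that made Borwein's conjecture hard. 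I would also exploit $q\mapsto q^{-1}$, which for $m_1=m_2$ and $n_2=n_3$ carries \eqref{conj3-prod} to itself up to a monomial factor and a sign, both to explain the palindromic form of the pattern and to halve the number of residue classes to be checked.

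The ``$n_i$ large enough'' clause I would handle by a persistence lemma together with a base case. For persistence one would show there is a threshold beyond which the sign pattern at a triple $(n_1,n_2,n_3)$ forces it at $(n_1+1,n_2,n_3)$, $(n_1,n_2+1,n_3)$ and $(n_1,n_2,n_3+1)$: passing to $n_1+1$, say, multiplies \eqref{conj3-prod} by one further pair of factors and so overlays copies of each $A_k$ shifted by $\pm K$ modulo $2K+1$, and once $n_1$ is large the terms of the correct sign in each residue class ought to dominate the freshly created terms of the wrong sign --- quantifying this domination is a relative version of Wang's asymptotic estimate. Persistence can only hold beyond a threshold: for $m_1,m_2\ge2$, $k=3$ and $K=1$, for instance, the pattern holds trivially at $n=0$ (the coefficient of $p^3$ vanishes) yet fails at $n=4$, whence the entry $N_k=5$ in Table~1. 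Granting persistence, it remains to verify the pattern at one sufficiently large admissible triple --- conjecturally $n_i=N_k$ in the $K$-analogue of Table~1 --- which, after the first step, is a finite positivity statement for explicit binomial sums: this is Wang's theorem for $K=1$, $k=0$, the Ismail--Kim--Stanton conjecture for general $K$, $k=0$, and in general would be attacked either by Wang's analytic method pushed through uniformly in $m_1,m_2,n_1,n_2,n_3$, or via the multiple basic hypergeometric series with Macdonald polynomial argument sketched in Section~3, from which one might hope to read off a formula for $A_k(q)$ that exhibits the residue-wise sign directly.

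The main obstacle is the combination of the quantitative half of the persistence lemma with the minimal-$n$ base case. Already the $p=0$ slice is the first Borwein conjecture, whose only known proof is Wang's delicate asymptotic analysis, and the $K>1$, $k=0$ slice is open; a complete proof must therefore simultaneously extend Wang's theorem to all $K$ and make its error terms uniform in the partial-theta truncation parameters. As the failure of Andrews' refinement at $(n_1,n_2,k)=(1,40,40)$ shows, that uniformity is genuinely delicate, and is exactly why the statement can only be asymptotic in $n$.
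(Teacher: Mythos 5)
The statement you have been asked about is a \emph{conjecture}: the paper offers no proof of it, only computational evidence (plus, in Section~3, multiple-series representations via Kaneko's lemma for the related functions of Conjecture~\ref{conj1}, about which the authors explicitly remark that even polynomiality in $p$, let alone positivity, is not visible). Your text is accordingly a research program rather than a proof, and it is honest about that; the preparatory observations are sound (the $p^k$-extraction is finite, the residue classes $(r-s)K \bmod (2K+1)$ with naive sign $(-1)^{r-s}$ do reproduce the window $\{0,\pm1,\dots,\pm l\}$ as the classes of least imbalance, and the $q\mapsto q^{-1}$ symmetry is the correct analogue of Note~6 after Conjecture~\ref{conj1}). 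But both load-bearing steps are genuinely open, so this cannot be graded as a proof of the statement.

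Concretely: (i) your base case already contains the first Borwein conjecture (the $p=0$, $K=1$ slice, known only through Wang's analytic argument) and, for $K>1$, the relevant case of the Ismail--Kim--Stanton conjecture, which is open; no amount of rearrangement removes this dependence. (ii) The persistence lemma is not sign-automatic, and this is worth spelling out because it is exactly where Borwein-type inductions die. Passing from $n_1$ to $n_1+1$ multiplies \eqref{conj3-prod} by $(1-q^{K+(2K+1)n_1})(1-q^{K+1+(2K+1)n_1})$, so the new coefficient in residue class $c$ is (old class $c$) $+$ (old class $c$, shifted) $-$ (old class $c-K$) $-$ (old class $c+K$). Already for $K=1$ and $c=1$ (a ``$\le 0$'' class) the term $-(\text{old class }-1)$ is $-(\text{nonpositive})\ge 0$, i.e.\ of the wrong sign; for $K=2$ and $c=1$ (a ``$\ge 0$'' class) the term $-(\text{old class }-1)$ is $-(\text{nonnegative})\le 0$, again of the wrong sign. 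So persistence requires a quantitative domination estimate that you invoke but do not supply, and for which no analogue of Wang's bounds exists away from $K=1$, $k=0$. Finally, the threshold triple at which the base case would be checked (the $K$-analogue of Table~1) is not identified, so even the finite verification you defer to is not yet a well-posed finite problem. None of this contradicts the conjecture --- your heuristics are consistent with the paper's data --- but the statement remains unproved after your argument, just as it is in the paper.
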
 

\subsection*{Notes}
\ 
\begin{enumerate}
\item If $m_1=0=m_2$, then the products on the left-hand side of \eqref{conj3-prod} are a special case of those considered in \cite[Conjecture~1]{IKS1999}.
\item When $K=1$, Conjecture~\ref{conj3} reduces to Conjecture~\ref{conj2}.
 \item We gathered data for the following values of the variables systematically.
 \begin{align*}
 m_1, m_2 & \in \{ 2, 3\}, \cr
n_1, n_2, n_3 & \in \{ 1, 2, \dots, 5\} ,\cr
k & \in \{1, 2, \dots, 10\}, \cr
K & \in\{2, 3, 4, \dots, 14 \}.
 \end{align*}
 In addition, we considered many random values, with
 \begin{align*}
 m_1, m_2, n_1, n_2, n_3 & \in \{0, 1, \dots, 10\}, \cr
k & \in \{0,1,\dots,  30\}, \cr
K & \in\{1, 2, 3, 4, \dots, 20 \}.
 \end{align*}
 In case we obtained a set of values that did not satisfy the required sign pattern, we performed further computations with larger values of $n_1$, $n_2$ or $n_3$. 
 \item  In our experiments, we found only a few values 
 where the predicted sign pattern does not hold, even for large values of $n_1$, $n_2$ and $n_3$. All of these were with either $m_1=0$ or $m_2=0$. For example, when $m_1=4, m_2=0, K=3, k= 18$. In particular the coefficient of $p^{18}q^{26}$ is predicted to be negative, but is in fact $1$, when $n_1$ and $n_2$ are large. This is the reason for the condition $m_1, m_2\geq 1$ in the statements of Conjectures~\ref{conj2} and \ref{conj3}.

%
\end{enumerate}


\section{Multiple series representations}
In this section we extend Andrews' explicit expressions for the polynomials
$A_n(q) $, $B_n(q)$ and $C_n(q)$ of \eqref{borwein1} appearing in the first Borwein conjecture.
Andrews~\cite[Eqs.~(3.4)--(3.6)]{An1995} showed that
\begin{subequations}
\begin{align}
A_n(q)&=\sum_{\la=-\infty}^\infty(-1)^\la q^{\la(9\la+1)/2}
\begin{bmatrix}2n\\n+3\la\end{bmatrix},\\
B_n(q)&=\sum_{\la=-\infty}^\infty(-1)^\la q^{\la(9\la-5)/2}
\begin{bmatrix}2n\\n+3\la-1\end{bmatrix},\\
C_n(q)&=\sum_{\la=-\infty}^\infty(-1)^\la q^{\la(9\la+7)/2}
\begin{bmatrix}2n\\n+3\la+1\end{bmatrix},
\end{align}
\end{subequations}
where
\begin{equation*}
\begin{bmatrix}m\\j\end{bmatrix}=\begin{cases}
0,&\text{if $j<0$ or $j>m$,}\\
\displaystyle \frac{(q;q)_m}{(q;q)_j(q;q)_{m-j}},\quad&
\text{otherwise,}
\end{cases}
\end{equation*}
denotes the $q$-binomial coefficient.
We use a result of Kaneko~\cite{Kan1996b} from the
theory of basic hypergeometric series with Macdonald polynomial argument
(see \cite{JK1996, Mac2013}) to give analogous expressions for the functions involved in Conjecture~\ref{conj1}. 

Let $F_{m,n}(p,q)$ denote the left-hand side of \eqref{eq:conj1}. We first dissect it as follows.
$$F_{m,n}(p,q) = F^0_{m,n}(p,q^3) - qF^1_{m,n}(p,q^3) -q^2F^2_{m,n}(p,q^3).$$
Thus, we have the definitions: 
\begin{align*}
F^0_{m,n}(p,q):=&{}\sum_{k=0}^{2m(m+1)n}p^kA_{m,n,k}(q),\\
F^1_{m,n}(p,q):=&{}\sum_{k=0}^{2m(m+1)n}p^kB_{m,n,k}(q),\\
F^2_{m,n}(p,q):=&{}\sum_{k=0}^{2m(m+1)n}p^kC_{m,n,k}(q).
\end{align*}
We extend Andrews' identities by writing each $F^l_{m,n}(p,q)$ (for $l=0, 1, 2$) as a 
$(2m+1)$-fold sum.  

In the following, $\la$ is an integer partition. That is, $\la$ is any sequence 
 $$\la=(\la_1,\la_2, \dots,\la_n, \dots)$$
 of non-negative integers such that
$\la_1\ge\la_2\ge\cdots\ge\la_n\ge \cdots $,
and contains only finitely many non-zero terms, called the parts of $\la$. 
We use the
symbol  $|\la|:=\la_1+\la_2+\cdots $ and say $\la$ is a partition of $|\la |$. 
In slight misuse of notation we shall also use $\lambda$
to denote finite non-increasing sequences of integers
which are not necessarily all non-negative.
For such sequences $\lambda$ the symbol $|\lambda|$ is
understood to denote the sum of the elements of $\lambda$,
as one would expect. 

\begin{Theorem}\label{mreps}
For $l=0,1,2$ we have
\begin{align*}
&F^l_{m,n}(p,q)=(-1)^{\binom{l+1}2}\,p^{m(m+1)n}q^{-mn^2}\notag\\
&\times\sum_{\substack{n\ge\la_1\ge\la_2\ge\cdots\ge\la_{2m+1}\ge-n\\[1pt]
|\la|\,\equiv\,-l\pmod{3}}}
\bigg(\prod_{1\le i<j\le 2m+1}
\frac{(1-p^{j-i}q^{\la_i-\la_j})(p^{j-i+1};q)_{\la_i-\la_j}}
{(1-p^{j-i})(p^{j-i-1}q;q)_{\la_i-\la_j}}\notag\\
&\qquad\qquad\qquad\qquad\qquad\quad\times
\prod_{i=1}^{2m+1}
\frac{(p^{i-1}q;q)_{2n}}
{(p^{i-1}q;q)_{n-\la_i}(p^{2m+1-i}q;q)_{n+\la_i}}
\notag\\
&\qquad\qquad\qquad\qquad\qquad\quad\times
(-1)^{|\la|}p^{\sum_{i=1}^{2m+1}(i-1-m)\la_i}
\notag\\
&\qquad\qquad\qquad\qquad\qquad\quad\times
q^{\binom{\la_1+1}2+\cdots+\binom{\la_{2m+1}+1}2-\frac{|\la|+l}3}
\bigg).
\end{align*}
\end{Theorem}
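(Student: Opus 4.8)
The plan is to derive the multiple-series representation of Theorem~\ref{mreps} from Kaneko's $q$-binomial theorem for basic hypergeometric series with Macdonald polynomial argument, by viewing the left-hand side of \eqref{eq:conj1} as a specialization of a multivariable product. First I would rewrite the product
$$(q,q^2;q^3)_n\prod_{j=1}^m(p^jq,p^jq^2;q^3)_n\,(p^jq^{-1},p^jq^{-2};q^{-3})_n$$
in a form to which Kaneko's theorem applies. The key observation is that the $2m+1$ groups of factors (the base group indexed by $j=0$, together with the $m$ "positive" groups and the $m$ "negative" groups) should be matched up with the $2m+1$ variables of a Macdonald-polynomial argument specialized to a geometric progression $x_i = p^{i-1}$ for $i=1,\dots,2m+1$. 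Under the principal specialization, the $A_{2m}$-type interaction factor $\prod_{1\le i<j\le 2m+1}$ that appears in Kaneko's identity collapses to exactly the factor
$$\prod_{1\le i<j\le 2m+1}\frac{(1-p^{j-i}q^{\la_i-\la_j})(p^{j-i+1};q)_{\la_i-\la_j}}{(1-p^{j-i})(p^{j-i-1}q;q)_{\la_i-\la_j}}$$
displayed in the theorem, and the one-variable factors produce the product over $i$ of the $q$-binomial-type quotients $(p^{i-1}q;q)_{2n}/((p^{i-1}q;q)_{n-\la_i}(p^{2m+1-i}q;q)_{n+\la_i})$.

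Next I would carry out the $3$-dissection. Expanding the product with Kaneko's formula gives a sum over sequences $n\ge\la_1\ge\cdots\ge\la_{2m+1}\ge-n$ (the range is forced by the $q$-binomials vanishing outside it), with a summand carrying the sign $(-1)^{|\la|}$, the $p$-power $p^{\sum(i-1-m)\la_i}$ coming from the normalization of the principal specialization, and a $q$-power built from $\binom{\la_i+1}{2}$ terms. To extract $F^l_{m,n}$ for $l=0,1,2$ one restricts to $|\la|\equiv -l\pmod 3$: indeed, writing the overall $q$-exponent as a multiple of $3$ plus a remainder, the residue of the total $q$-degree modulo $3$ is governed by $|\la|$, so the three residue classes of $|\la|$ pick out the three pieces $A_{m,n,k}(q^3)$, $qB_{m,n,k}(q^3)$, $q^2C_{m,n,k}(q^3)$ respectively. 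Matching up the leftover $q$-power then yields the exponent $\binom{\la_1+1}{2}+\cdots+\binom{\la_{2m+1}+1}{2}-\tfrac{|\la|+l}{3}$, and the prefactor $(-1)^{\binom{l+1}{2}}p^{m(m+1)n}q^{-mn^2}$ absorbs the global normalization constants (the $q^{-mn^2}$ coming from rewriting the negatively-based factorials $(p^jq^{-1},p^jq^{-2};q^{-3})_n$ in terms of ordinary $q$-shifted factorials, and $p^{m(m+1)n}$ from clearing negative powers of $p$). A consistency check is the case $m=0$: there the interaction product is empty, $\la$ is a single integer, and the formula must reduce to Andrews' identities \eqref{borwein1}ff.; verifying this special case pins down all the normalization constants.

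The main obstacle is the first step: recognizing the precise form of Kaneko's identity that applies and checking that the principal specialization $x_i=p^{i-1}$ really produces the stated interaction factor, including getting the shifts $p^{j-i\pm1}$ and the direction of the $q$-Pochhammers exactly right. This is essentially a careful bookkeeping of which Macdonald-polynomial / Milne–Gustafson-type summation is being invoked and how its Weyl-denominator-like factor behaves under principal specialization; the residue-class argument for the $3$-dissection and the collection of prefactors are then routine once the master identity is in hand. I would therefore devote the bulk of the write-up to stating Kaneko's theorem in the needed form, performing the specialization explicitly, and only then streamlining the $3$-dissection and the determination of the constants via the $m=0$ check.
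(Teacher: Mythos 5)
Your proposal follows essentially the same route as the paper: the authors also rewrite the $2m+1$ groups of factors as a single product $\prod_{i=1}^{2m+1}(p^{-m+i-1}q^2,p^{m-i+1}q;q^3)_n$ (extracting exactly the prefactor $p^{m(m+1)n}q^{-3mn^2}$ you describe), apply Kaneko's triple product identity specialized via homogeneity and the principal specialization formula to a geometric progression in $p$ (their Lemma~\ref{corms}, with $(n,N,z,q,t)\mapsto(2m+1,n,p^mq,q^3,p)$), and then perform the $3$-dissection by the residue class of $|\la|$. The only cosmetic difference is that you propose to pin down the normalization constants by checking $m=0$ against Andrews' formulas rather than computing them directly, which is a harmless variation.
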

\begin{Remark}
From the expression in Theorem~\ref{mreps} it is not obvious
that the functions $F^l_{m,n}(p,q)$ are actually polynomials in $p$
of degree $2m(m+1)n$. 
\end{Remark}

Before proving the theorem, we outline some background information from the theory of basic hypergeometric series with Macdonald polynomial argument.  For the definition of the Macdonald polynomials $P_\la(x_1,\dots,x_n;q,t)$
together with their most essential properties, we refer to
Macdonald's book \cite{Mac1995}.

In particular, the $P_\la(x_1,\dots,x_n;q,t)$ are homogenous in
$x_1,\dots,x_n$ of degree $|\la|$; we have, after scaling each $x_i$ by $z$,
\begin{equation}\label{hom}
P_\la(zx_1,\dots,zx_n;q,t)=z^{|\la|}P_\la(x_1,\dots,x_n;q,t). 
\end{equation}
We also make use of the principal specialization formula \cite[p.\ 343, Ex.\ 5]{Mac1995}: Let 
\begin{equation}\label{pspec}
P_{\la}(1,t,\dots,t^{n-1};q,t)
=t^{n(\la)}\prod_{1\le i<j\le n}\frac{(t^{j-i+1};q)_{\la_i-\la_j}}
{(t^{j-i};q)_{\la_i-\la_j}},
\end{equation}
where $\la$ has at most $n$ parts, and $n(\la)=\sum_{i=1}^n(i-1)\la_i$. 

We require the following lemma.
\begin{Lemma}\label{corms}
Let $N$ be a non-negative integer. Then
\begin{align*}
&\prod_{i=1}^n(zt^{1-i},z^{-1}qt^{i-1};q)_N=\notag\\
&\sum_{N\ge\la_1\ge\la_2\ge\cdots\ge\la_n\ge-N}
\bigg(\prod_{1\le i<j\le n}
\frac{(1-q^{\la_i-\la_j}t^{j-i})(t^{j-i+1};q)_{\la_i-\la_j}}
{(1-t^{j-i})(qt^{j-i-1};q)_{\la_i-\la_j}}\notag\\
&\qquad\qquad\qquad\qquad\qquad\times\prod_{i=1}^n
\frac{(qt^{i-1};q)_{2N}}
{(qt^{i-1};q)_{N-\la_i}(qt^{n-i};q)_{N+\la_i}}\notag\\
&\qquad\qquad\qquad\qquad\qquad\times
q^{\binom{\la_1+1}2+\cdots+\binom{\la_n+1}2}
t^{\sum_{i=1}^n(i-1)\la_i}(-z^{-1})^{|\la|}
\bigg).
\end{align*}
\end{Lemma}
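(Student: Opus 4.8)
The plan is to derive Lemma~\ref{corms} as a principal specialization of an identity that expands a product of linear factors in terms of Macdonald polynomials. First I would recall the known Macdonald-polynomial analogue of the $q$-binomial theorem (or, equivalently, of Kaneko's expansion \cite{Kan1996b}), which expresses $\prod_{i=1}^n (z x_i; q)_N$ — or a suitably normalized two-sided product — as a sum over partitions $\lambda$ with at most $n$ parts of $P_\lambda(x_1,\dots,x_n;q,t)$ times an explicit coefficient involving the $q$-binomial-type factor $\prod_i (qt^{i-1};q)_{2N}/\big((qt^{i-1};q)_{N-\lambda_i}(qt^{n-i};q)_{N+\lambda_i}\big)$ and a power of $(-z^{-1})$. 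The two-sided nature of the product $\prod_i (z t^{1-i}, z^{-1} q t^{i-1};q)_N$ and the range $N \ge \lambda_1 \ge \cdots \ge \lambda_n \ge -N$ (integers, not necessarily non-negative) signals that the right object is Kaneko's ${}_1\phi_0$-type summation evaluated at a geometric-progression argument, after a shift that turns ordinary partitions into the centered integer sequences used throughout Section~3.

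The key steps, in order, would be: (1) state precisely the Macdonald-polynomial summation from \cite{Kan1996b} that we invoke, in the normalization where the argument is $(x_1,\dots,x_n)$ and the product is $\prod_{i=1}^n(zx_i;q)_N$; (2) specialize $x_i = t^{1-i}$, so that $x_1=1, x_2 = t^{-1},\dots$; by homogeneity~\eqref{hom}, $P_\lambda(1,t^{-1},\dots,t^{1-n};q,t) = t^{-(n-1)|\lambda|/??}\cdot$ — more carefully, write $P_\lambda(t^{1-n},t^{2-n},\dots,t^{0};q,t) = (t^{1-n})^{|\lambda|} P_\lambda(1,t,\dots,t^{n-1};q,t)$ and then apply the principal specialization formula~\eqref{pspec}; (3) the factor $t^{n(\lambda)} = t^{\sum(i-1)\lambda_i}$ from~\eqref{pspec} combines with the $(t^{1-n})^{|\lambda|}$ and with whatever $t$-power sits in Kaneko's coefficient to produce the stated $t^{\sum(i-1)\lambda_i}$ after re-indexing; (4) reconcile the ranges: Kaneko's sum is over partitions (non-negative $\lambda_i$) with a truncation $\lambda_1 \le N$, but the product $\prod_i(z^{-1}qt^{i-1};q)_N$ — the "second half" of the two-sided product — is precisely what extends the effective summation range to $\lambda_n \ge -N$; concretely one writes the two-sided product as a single product over a shifted index or uses the known fact that $(a;q)_N$ for the reflected variable contributes the lower truncation, so that replacing $\lambda_i \mapsto \lambda_i - (\text{shift})$ turns Kaneko's one-sided sum into the symmetric two-sided one; (5) read off the $q$-power: the factor $q^{\sum \binom{\lambda_i+1}{2}}$ should emerge from combining $q^{\binom{N+1}{2}}$-type normalizations in Kaneko's coefficient with the Macdonald $b_\lambda(q,t)$ or $c_\lambda$ normalization, or more directly from the explicit coefficient in Kaneko's identity after the specialization; and (6) check the sign $(-z^{-1})^{|\lambda|}$, which is immediate from tracking the $z$-dependence through homogeneity.

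The main obstacle I expect is step~(4): getting the summation range and the index shift exactly right. Kaneko's expansion is naturally a sum over genuine partitions, whereas Lemma~\ref{corms} wants a sum over centered sequences $N \ge \lambda_1 \ge \cdots \ge \lambda_n \ge -N$, and the passage between the two requires recognizing that the "dual" product $\prod_i (z^{-1} q t^{i-1};q)_N$ is what symmetrizes the picture — equivalently, that the combined two-sided product is essentially a Macdonald-polynomial version of the finite ${}_1\psi_1$ or of the $q$-Gauss/Chu--Vandermonde sum with a reflected summation variable. One must also verify that the $q$-binomial-like coefficient $\prod_i (qt^{i-1};q)_{2N}/\big((qt^{i-1};q)_{N-\lambda_i}(qt^{n-i};q)_{N+\lambda_i}\big)$ is exactly the product that appears after the shift, including the subtle point that $(qt^{n-i};q)_{N+\lambda_i}$ uses the index $n-i$ rather than $i-1$, which reflects the reversal symmetry $\lambda \mapsto -\lambda_{\text{reversed}}$ inherent in the two-sided product. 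Once the bookkeeping of these shifts is pinned down, steps~(2), (3), (5) and~(6) are routine applications of~\eqref{hom} and~\eqref{pspec} together with standard $q$-shifted-factorial manipulations. An alternative, self-contained route — should tracing through Kaneko's normalization prove awkward — is to prove Lemma~\ref{corms} directly by induction on $n$: the $n=1$ case is the ordinary $q$-binomial theorem for $(zt^{1-i},z^{-1}qt^{i-1};q)_N$ with a single factor, i.e. essentially the finite $q$-binomial expansion of $(z;q)_N(z^{-1}q;q)_N$, and the inductive step peels off one variable using the known branching/Pieri rule for Macdonald polynomials; but I would first attempt the cleaner derivation from \cite{Kan1996b} as outlined above.
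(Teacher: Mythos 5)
Your proposal follows essentially the same route as the paper: the authors quote a reformulation of Kaneko's \cite[Lemma~2]{Kan1996b} that is already stated as a two-sided sum over $N\ge\la_1\ge\cdots\ge\la_n\ge-N$, with the Macdonald polynomial evaluated at the shifted partition $\la-\la_n$ and a compensating prefactor $(x_1\cdots x_n)^{\la_n}$, and then set $x_i=-z^{-1}t^{i-1}$ and apply homogeneity \eqref{hom} and the principal specialization \eqref{pspec}. The index-shift mechanism you flag as the main obstacle in your step (4) is precisely how the two-sided range is handled, so your plan is correct and matches the paper's proof.
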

\begin{proof} 
We use a reformulation
of a result by Kaneko~\cite[Lemma~2]{Kan1996b}.
Let $N$ be a non-negative integer. Then
\begin{align*}
&\prod_{i=1}^n(-x_iq,-x_i^{-1};q)_N=\notag\\
&\sum_{N\ge\la_1\ge\la_2\ge\cdots\ge\la_n\ge-N}
\bigg(\prod_{1\le i<j\le n}\frac{(qt^{j-i};q)_{\la_i-\la_j}}
{(qt^{j-i-1};q)_{\la_i-\la_j}}\notag\\
&\qquad\qquad\qquad\qquad\qquad\times
\prod_{i=1}^n\frac{(qt^{i-1};q)_{2N}}
{(qt^{i-1};q)_{N-\la_i}(qt^{n-i};q)_{N+\la_i}}
\notag\\
&\qquad\qquad\qquad\qquad\qquad\times
q^{\binom{\la_1+1}2+\cdots+\binom{\la_n+1}2}
\notag\\
&\qquad\qquad\qquad\qquad\qquad\times
(x_1\cdots x_n)^{\la_n}P_{\la-\la_n}(x_1,\dots,x_n;q,t)\bigg),
\end{align*}
where $\la-\la_n$ stands for the partition $(\la_1-\la_n,\dots,\la_n-\la_n)$.

In Kaneko's identity, we take $x_i=-z^{-1}t^{i-1}$, for $1\le i\le n$,
make use of the homogeneity \eqref{hom} and the principal specialization
in \eqref{pspec}, to obtain the lemma.
\end{proof}
\begin{proof}[Proof of Theorem~\ref{mreps}]
We first observe that the product on the left-hand side of \eqref{eq:conj1}
can be written as
\begin{align*}
&\prod_{j=0}^m(p^jq,p^jq^2;q^3)_n\prod_{j=1}^m
(p^jq^{-1},p^jq^{-2};q^{-3})_n\notag\\
&=p^{m(m+1)n}q^{-3mn^2}
\prod_{i=1}^{2m+1}(p^{-m+i-1}q^2,p^{m-i+1}q;q^3)_n.
\end{align*}
Next, we apply the $(n,N,z,q,t)\mapsto(2m+1,n,p^mq,q^3,p)$
case of Lemma~\ref{corms} to arrive at
\begin{align*}
&\prod_{j=0}^m(p^jq,p^jq^2;q^3)_n\prod_{j=1}^m
(p^jq^{-1},p^jq^{-2};q^{-3})_n
=p^{m(m+1)n}q^{-3mn^2}\notag\\
&\times\sum_{n\ge\la_1\ge\la_2\ge\cdots\ge\la_{2m+1}\ge-n}
\bigg(\prod_{1\le i<j\le 2m+1}
\frac{(1-p^{j-i}q^{3\la_i-3\la_j})(p^{j-i+1};q^3)_{\la_i-\la_j}}
{(1-p^{j-i})(p^{j-i-1}q^3;q^3)_{\la_i-\la_j}}\notag\\
&\qquad\qquad\qquad\qquad\qquad\qquad\times
\prod_{i=1}^{2m+1}
\frac{(p^{i-1}q^3;q^3)_{2n}}
{(p^{i-1}q^3;q^3)_{n-\la_i}(p^{2m+1-i}q^3;q^3)_{n+\la_i}}
\notag\\&\qquad\qquad\qquad\qquad\qquad\qquad\times
(-1)^{|\la|}p^{\sum_{i=1}^{2m+1}(i-1-m)\la_i}
\notag\\&\qquad\qquad\qquad\qquad\qquad\qquad\times
q^{3\binom{\la_1+1}2+\cdots+3\binom{\la_{2m+1}+1}2-|\la|}\bigg).
\end{align*}

By picking the 
coefficients of $q^l$
with $l$ belonging to a residue class modulo $3$, we obtain the theorem.
\end{proof}

\begin{Remark} 
We can obtain a more general multiseries expression for the products
\begin{align*}
&\prod_{j=0}^m(p^jq^a,p^jq^{2K+1-a};q^{2K+1})_n\prod_{j=1}^m
(p^jq^{-a},p^jq^{a-1-2K};q^{-2K-1})_n
\end{align*}
by following a similar analysis as carried out in the proof of Theorem~\ref{mreps},
where 
we apply the $(n,N,z,q,t)\mapsto(2m+1,n,p^m q^a,q^{2K+1},p)$ case of Lemma~\ref{corms}.
The case $a=K$ gives the products on the left-hand side of \eqref{conj3-prod}, 
with $n=n_1=n_2=n_3$ and $m=m_1=m_2$. 
\end{Remark}

\section*{Acknowledgements}
We thank Dennis Stanton and the anonymous referee for helpful suggestions.
The computational results presented here have been achieved in part using the
Vienna Scientific Cluster (VSC).
The research of the first author was partially supported by the
Austrian Science Fund (FWF), grant~F50-N15, in the framework of the
Special Research Program ``Algorithmic and Enumerative Combinatorics". 
The research of the second author was partially supported by the
Austrian Science Fund (FWF), grant~P~3205-N35.
Open access funding is provided by the University of Vienna.
%

\end{document}